\newtheorem{theorem}{Theorem}[section]
\newtheorem{lemma}[theorem]{Lemma}
\newtheorem{question}[theorem]{Question}
\theoremstyle{definition}}
\theoremstyle{definition}}
\theoremstyle{definition}}
\theoremstyle{definition}}
\numberwithin{equation}{section}
\def\C{{\mathbb C}}
\def\P{{\mathbb P}}
\def\N{{\mathbb N}}
\def\Z{{\mathbb Z}}
\def\R{{\mathbb R}}
\def\H{{\mathcal H}}
\def\epsilon{\varepsilon}
\def\phi{\varphi}
\def\leq{\leqslant}
\def\geq{\geqslant}
\def\deg{\hbox{\tt deg}\,}
\def\bin#1#2{\left({{#1}\atop {#2}}\right)}
\title{On the set of hypercyclic vectors for the differentiation operator}
\author{Stanislav Shkarin}
\date{}
\begin{document}

\maketitle

\begin{abstract} Let $D$ be the differentiation operator $Df=f'$
acting on the Fr\'echet space $\H$ of all entire functions in one
variable with the standard (compact-open) topology. It is known
since 1950's that the set $H(D)$ of hypercyclic vectors for the
operator $D$ is non-empty. We treat two questions raised by Aron,
Conejero, Peris and Seoane-Sep\'ulveda whether the set $H(D)$
contains (up to the zero function) a non-trivial subalgebra of $\H$
or an infinite dimensional closed linear subspace of $\H$. In the
present article both questions are answered affirmatively.
\end{abstract}

\small \noindent{\bf MSC:} \ \ 47A16, 37A25

\noindent{\bf Keywords:} \ \ Hypercyclic operators, Entire
functions, Hypercyclic vectors

\normalsize

\section{Introduction \label{s1}}\rm

As usual, $\C$ is the field of complex numbers, $\Z_+$ is the set of
non-negative integers and $\N$ is the set of positive integers. Let
$X$ be a topological vector space and $T$ be a continuous linear
operator acting on $X$. Recall that $x\in X$ is called a {\it
hypercyclic vector} for $T$ if the orbit $\{T^nx:n\in\Z_+\}$ is
dense in $X$. By $H(T)$ we denote the set of hypercyclic vectors for
$T$. The operator $T$ is called {\it hypercyclic} if it has a
hypercyclic vector. For more information on hypercyclic operators
see the surveys \cite{ge1,ge2} and references therein. We would just
like to mention that the set $H(T)$ for any hypercyclic operator $T$
contains all non-zero vectors from a dense linear subspace of $X$.
It follows from the fact due to Bourdon \cite{bourd} (see also
\cite{ansa}) that if $x\in H(T)$, then $p(T)x\in H(T)$ for any
non-zero polynomial $p$. The question whether $H(T)$ for a given
operator $T$ must contain all non-zero vectors from a closed
infinite dimensional subspace of $X$ was studied by several authors.
See \cite{alf1,alf2} for sufficient conditions in terms of the
spectrum of $T$ for $H(T)$ to contain all non-zero vectors from an
infinite dimensional closed linear linear subspace of $X$ in the
case when $X$ is a complex Banach space.

By $\H$ we denote the space of all entire functions $f:\C\to\C$ with
the topology of uniform convergence on compact sets. It is
well-known that $\H$ is a Fr\'echet space. That is, $\H$ is complete
metrizable locally convex space, whose topology is defined by the
increasing sequence of norms $f\mapsto \max\{|f(z)|:|z|\leq n\}$ for
$n\in\N$. The differentiation operator
\begin{equation*}
D:\H\to\H,\quad Df=f'
\end{equation*}
is a continuous linear operator on $\H$. Due to MacLane \cite{mac},
$D$ is hypercyclic. It is well-known \cite{ge1} that the set of
hypercyclic vectors of any hypercyclic operator on a separable
metrizable topological vector space is a dense $G_\delta$-set. Hence
$H(D)$  is a dense $G_\delta$-set in $\H$. We deal with two problems
raised by Aron, Conejero, Peris and Seoane-Sep\'ulveda in
\cite{aron1}. It is worth mentioning that $\H$ is an algebra with
respect to pointwise multiplication.

\begin{question}\label{ar1} Does $H(D)$ contain all non-zero vectors
from a closed infinite dimensional linear subspace of $\H$?
\end{question}

\begin{question}\label{ar2} Does $H(D)$ contain all non-constant
functions from a non-trivial subalgebra of $\H$? In other words,
does there exist $f\in\H$ such that $p\circ f\in H(D)$ for any
non-constant polynomial $p$?
\end{question}

Note that the analog of the last question for the translation
operator $Tf(z)=f(z-1)$ on $\H$ has been answered negatively by the
same set of authors \cite{aron2}. Namely, they have shown that for
any $f\in\H$ and any $k\geq 2$, $f^k\notin H(T)$. In \cite{aron1} it
is also shown that the set $\{f\in\H:f^n\in H(D)\ \ \text{for any}\
n\in\N\}$ is a dense $G_\delta$-set in $\H$, thus providing an
evidence that the answer to Question~\ref{ar2} could be affirmative.
In the present paper both above questions are answered affirmatively
and constructively. It is worth noting that Question~\ref{ar2} was
recently independently answered by Bayart and Matheron by means of
applying the Baire theorem. Their proof will soon appear in the book
\cite{bama}.

\begin{theorem}\label{main1} There is a closed infinite
dimensional subspace $L$ of $\H$ such that $L\setminus\{0\}\subset
H(D)$.
\end{theorem}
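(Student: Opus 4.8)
The plan is to realize $L$ as the closed linear span of a sequence $(f_k)_{k\in\N}$ of entire functions whose Taylor supports are pairwise disjoint, so that the block structure is rigid under passage to limits and every nonzero element has a well-defined leading block. Writing an entire function in the renormalized basis $u_m(z)=z^m/m!$, for which $Du_m=u_{m-1}$, the operator $D$ becomes the plain backward shift: if $g=\sum_m c_mu_m$ then $D^ng=\sum_{l\geq0}c_{l+n}u_l=\sum_l(c_{l+n}/l!)z^l$. Thus the coefficient of $z^l$ in $D^ng$ is governed by the single coordinate $c_{l+n}$, and $g\in\H$ is equivalent to $\sum_m|c_m|\rho^m/m!<\infty$ for every $\rho$. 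Since polynomials are dense in $\H$, to prove $g\in H(D)$ it suffices to show that for every polynomial $p$, every $R>0$ and every $\epsilon>0$ there is $n$ with $\max_{|z|\leq R}|D^ng(z)-p(z)|<\epsilon$; this is the approximation task I would arrange to solve for every nonzero $g\in L$ at once.

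I would fix a countable set $\{(p_i,s_i)\}_{i\in\N}$ dense in (polynomials)$\times(\C\setminus\{0\})$, with $d_i=\deg p_i$, and schedule all pairs $(k,i)$ into pairwise disjoint short intervals $I_{k,i}=[P_{k,i},P_{k,i}+d_i]\subset\Z_+$, separated by enormous gaps. On $I_{k,i}$ I place a scaled coded copy of $p_i$: the coordinates of $f_k$ at $P_{k,i}+l$ are set to $s_i\,b^{(i)}_l\,l!$ for $0\leq l\leq d_i$, where $p_i=\sum_l b^{(i)}_lz^l$, and all other coordinates of $f_k$ vanish. The supports $S_k=\bigcup_i I_{k,i}$ are disjoint, so the $f_k$ are linearly independent and each coefficient $a_k$ of $g=\sum_k a_kf_k$ is recovered from a fixed Taylor coefficient of $g$ by a continuous functional; hence $L=\overline{\mathrm{span}}\{f_k\}$ is a genuine closed infinite dimensional subspace, consisting exactly of the functions whose coordinates on each $S_k$ are a fixed scalar multiple of those of $f_k$.

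Now take any nonzero $g=\sum_k a_kf_k\in L$ and let $k_0$ be the least index with $a_{k_0}\neq0$. Given $(p,R,\epsilon)$, I choose $i$ with $p_i$ close to $p$ on $|z|\leq R$ and $s_i$ close to $1/a_{k_0}$, and set $n=P_{k_0,i}$. For $0\leq l\leq d_i$ the coordinate $c_{n+l}$ lies in $I_{k_0,i}$, so --- the intervals being disjoint and $a_k=0$ for $k<k_0$ --- it equals $a_{k_0}s_ib^{(i)}_l\,l!$; hence the degree-$\leq d_i$ part of $D^ng$ is exactly $a_{k_0}s_i\,p_i\approx p$. The remaining error is the tail $\sum_{l>d_i}|c_{n+l}|R^l/l!=\sum_{m>n+d_i}|c_m|R^{m-n}/(m-n)!$, whose nonzero terms come only from intervals beginning beyond the gap following $I_{k_0,i}$.

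The heart of the matter, and the step I expect to be hardest, is controlling this tail uniformly over all of $L$. The danger is that differentiation amplifies far-away coordinates: the weight $R^{m-n}/(m-n)!$ attached to a coordinate at distance $m-n$ is the entire-function weight only relative to the shifted origin, and for $m$ slightly larger than $n$ it dwarfs $R^m/m!$. This is precisely why the gaps must grow fast: I would choose them, by a diagonal argument over $R\in\N$, so large (relative to the \emph{predetermined} code sizes $|s_{i'}b^{(i')}_{l'}l'!|$ and to $R$) that the purely geometric quantity $\sum_{\text{intervals beyond position }n}|(\text{code})|\,R^{m-n}/(m-n)!$ tends to $0$ as $n\to\infty$ for each $R$. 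For a fixed $g$ the coefficients $(a_k)$ are bounded, so the actual tail is at most $(\sup_k|a_k|)$ times this geometric quantity; letting $n=P_{k_0,i}\to\infty$ through the dense family that also keeps $p_i\to p$ and $s_i\to1/a_{k_0}$ then forces $\|D^ng-p\|_R<\epsilon$. This would yield $g\in H(D)$ for every nonzero $g\in L$, as required.
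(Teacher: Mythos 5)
Your construction follows the same architecture as the paper's proof (pairwise disjointly supported functions $f_k$ carrying coded copies of a dense family of polynomials in widely separated blocks, with $D^n$ acting as a backward shift that reproduces a coded polynomial plus a tail), but there is a genuine gap at the decisive step, namely the sentence ``For a fixed $g$ the coefficients $(a_k)$ are bounded.'' This is false for your $L$. In your construction every block of $f_k$ sits at positions $P_{k,i}$ that tend to infinity with $k$, and in Taylor coefficients each coded entry carries the factor $l!/(P_{k,i}+l)!$; hence for each fixed $a$ one has $\|f_k\|_a=\sum_{i,l}|s_ib^{(i)}_l|\,l!\,a^{P_{k,i}+l}/(P_{k,i}+l)!\to0$ super-exponentially as $k\to\infty$. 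Since the supports are disjoint, the norms are additive over the blocks, so $\sum_k a_kf_k$ converges in $\H$ if and only if $\sum_k|a_k|\,\|f_k\|_a<\infty$ for every $a$; taking for instance $a_k=2^{-k}/\|f_k\|_k$ (which satisfies this criterion because $\|f_k\|_a\leq\|f_k\|_k$ for $k\geq a$) produces an element of $L$ whose coefficient sequence is wildly unbounded, of roughly factorial size in $P_{k,1}$. For such $g$ the bound ``tail $\leq(\sup_k|a_k|)\times$ geometric quantity'' is vacuous, and your ``purely geometric quantity,'' which involves only the predetermined code sizes and not $g$, cannot control the contribution of the far blocks of $f_k$ for large $k$.

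The missing idea is exactly the mechanism the paper uses. Membership of $g$ in $\H$ does force a quantitative decay of the coefficients relative to the block positions: from $\sum_m|c_m|\rho^m/m!<\infty$ and the coordinate functional on the first block one gets $|a_k|\leq C_\rho\,(P_{k,1})!/\rho^{P_{k,1}}$ for \emph{every} $\rho$, and it is this decay, not boundedness, that can beat the amplification factor $(P)!/(P-n)!\approx P^{\,n}$ suffered by a block at position $P$ under $D^{n}$. To exploit it one also needs a growth condition tying later block positions to earlier ones, the analogue of the paper's requirement $\beta(k+1)^{\beta(k)}\leq2^{\beta(k+1)}$, which guarantees $P^{\,n}\leq2^{P}$ whenever $n$ is a block position and $P$ a later one. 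The paper arranges all of this by inserting into each $f_d$ a marker monomial $g_d(z)=z^{\beta(m_d)}$ with unit coefficient: convergence of $\sum_d c_dz^{\beta(m_d)}$ in $\H$ yields $|c_d|\leq c(a)(4a)^{-\beta(m_d)}$, and that estimate, combined with the growth condition on $\beta$, annihilates the contribution of all blocks beyond the one being read off (the terms $q_k$ and $h_k$ in the paper's decomposition). With the coefficient decay extracted from $g\in\H$ and such a growth condition imposed on your positions $P_{k,i}$, your argument can be repaired, but as written the tail estimate does not cover all of $L\setminus\{0\}$.
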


\begin{theorem}\label{main2} There exists $f\in\H$ such that $p\circ f\in H(D)$ for
any non-constant polynomial $p$.
\end{theorem}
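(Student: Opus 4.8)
The plan is to reduce Theorem~\ref{main2} to a single approximation statement about one entire function and all of its integer powers, thereby dissolving the uncountable family of polynomials $p$ into a countable bookkeeping problem. First I would record the standard hypercyclicity criterion for $D$: an entire function $g$ lies in $H(D)$ if and only if for every member $g_m$ of a fixed countable dense set $\{g_m\}\subset\H$ (say, polynomials with Gaussian-rational coefficients) and every $k\in\N$ there is an $n$ with $\sup_{|z|\le k}|g^{(n)}(z)-g_m(z)|<1/k$. Writing a non-constant polynomial as $p(w)=\sum_{\ell=0}^{d}a_\ell w^\ell$ with $d\ge 1$ and $a_d\neq 0$, one has $p\circ f=\sum_{\ell=0}^{d}a_\ell f^\ell$, so for $n\ge 1$
\begin{equation*}
(p\circ f)^{(n)}=\sum_{\ell=1}^{d}a_\ell\,(f^\ell)^{(n)}.
\end{equation*}
The idea is to arrange that at suitable orders $n$ the top term $a_d(f^d)^{(n)}$ carries all the mass while the lower terms are negligible; since a target $g_m$ for $p\circ f$ may be replaced by $g_m/a_d$, approximating $(p\circ f)^{(n)}$ by $g_m$ becomes approximating $(f^d)^{(n)}$ by $g_m/a_d$, and the arbitrary coefficients of $p$ are absorbed into this rescaling.

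This motivates the following \emph{Construction Lemma}, to which the whole theorem reduces: there is an entire function $f$ such that for every $d\in\N$, every $\psi\in\H$ and every $R,\epsilon>0$ there is an $n\in\N$ with $\sup_{|z|\le R}|(f^d)^{(n)}(z)-\psi(z)|<\epsilon$ and, simultaneously, $\sup_{|z|\le R}|(f^\ell)^{(n)}(z)|<\epsilon$ for all $1\le\ell<d$. Granting the lemma, fix a non-constant $p$ and a task $(m,k)$, apply the lemma with $\psi=g_m/a_d$, $R=k$, and $\epsilon$ small relative to $1/k$ and to the coefficients of $p$, and read off $\sup_{|z|\le k}|(p\circ f)^{(n)}-g_m|<1/k$; as $(m,k)$ were arbitrary this gives $p\circ f\in H(D)$, and the case $d=1$ is just the existence of a hypercyclic $f$, which is MacLane's theorem. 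To prove the lemma I would build $f$ as a gap power series $f(z)=\sum_j P_j(z)$, where $P_j$ is a short block $\sum_{i=0}^{s_j}c_{j,i}z^{d_j+i}$ supported in a range of exponents with the gaps $d_{j+1}-d_j$ growing extremely fast and the coefficients $c_{j,i}$ decaying fast enough that $f\in\H$. The blocks are chosen inductively against an enumeration of all tasks $(d,\psi,R,\epsilon)$ drawn from countable dense data: at the stage treating a given task, the new block is tuned so that one exponent band of $f^d$ realizes, after differentiating $n$ times for a suitably large $n$, a polynomial that is $\epsilon$-close to $\psi$ on the disc of radius $R$. This uses the elementary fact that $n$-fold antidifferentiation divides coefficients by falling factorials, so a high order $n$ makes any prescribed $\psi$ come from monomials that are themselves minuscule on any fixed disc.

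The main obstacle is the bookkeeping inside the Construction Lemma: I must guarantee that at the chosen order $n$ not only does $(f^d)^{(n)}$ approximate $\psi$, but every lower power $(f^\ell)^{(n)}$ with $1\le\ell<d$, together with the contributions of all other blocks and of the tail of the series, is uniformly tiny on the disc of radius $R$. The difficulty is that raising the gap series to the power $\ell$ mixes exponents through the multinomial expansion, so the various powers $f^\ell$ occupy overlapping exponent bands whose $n$-th derivatives must be disentangled. The remedy is to let the gaps $d_{j+1}-d_j$ grow fast enough that, among the exponents relevant at order $n$, exactly one multinomial band of exactly one power is active and all others are pushed either below the exponent $n$ (hence annihilated by differentiation) or to exponents so large that the falling-factorial growth is outrun by the decay of the coefficients; quantifying this separation while simultaneously ensuring that $f$ is entire is the technical heart of the argument. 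It is worth stressing that a naive perturbation approach to the uncountable family of $p$ — handling a dense set of polynomials and approximating the rest — fails precisely because differentiation is unbounded, so a function-space-small change in $p\circ f$ can produce an arbitrarily large change in a high derivative; the reduction above sidesteps this by never perturbing $p$ and instead pushing all of its freedom into the scalar $a_d$ and the target $\psi$.
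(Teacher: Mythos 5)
Your reduction of Theorem~\ref{main2} to the Construction Lemma is correct, and it is essentially the same reduction the paper makes: the paper normalizes $p$ to be monic using scalar-invariance of $H(D)$ where you divide the target by $a_d$, and the paper's conditions (a3)--(a4) are exactly your lemma in sequential form (along a subsequence of orders $\nu_k$ one has $D^{\nu_k}(f^d)\to p_k$ with $\{p_k\}$ dense, while $D^{\nu_k}(f^j)\to 0$ for $1\le j<d$). The genuine gap is that your sketch of the Construction Lemma stops exactly where the real difficulty of the theorem begins: you never say how a block can be ``tuned'' so that one exponent band of $f^d$ realizes a prescribed target after differentiation, while every band of every lower power at or above the differentiation order is annihilated or negligible. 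This is not a routine quantification of gap growth and coefficient decay. A band of $f^d$ is not a free parameter: it is a degree-$d$ polynomial expression in the block's coefficients. If the active band is a mixed band $\bin{d}{i}g^{d-i}P^i$ with $i<d$ (where $g$ is the sum of the earlier blocks), then each lower power $f^\ell$ with $i\le\ell<d$ contains the band $\bin{\ell}{i}g^{\ell-i}P^i$ at the same exponents, so the lower powers cannot simultaneously be killed there; and if the active band is the pure power $P^d$ of a single short block, you are prescribing coefficients of a perfect $d$-th power, which is impossible for generic targets (a Rouch\'e/Hurwitz zero count shows a $d$-th power cannot be coefficient-wise close to, e.g., a polynomial with one simple zero) and only works for the top segment of $P^d$, via a formal $d$-th root of the reversed polynomial --- an algebraic device your proposal never mentions.

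The paper resolves precisely this point by giving each block two widely separated sub-bands rather than one: $r_{\alpha,n}(z)=(z/n)^n+q_{\alpha,n}(z)$ in (\ref{ra}), a small ``carrier'' monomial at exponent $n$ plus a ``payload'' $q_{\alpha,n}$ supported near $n^2$, with $q_{\alpha,n}$ chosen so that the unique term of $r_{\alpha,n}^d$ containing exactly one payload factor equals exactly $V^{n^2+(d-1)n}p$. Because that term is \emph{linear} in $q_{\alpha,n}$, the tuning is trivial (one simply solves for $q_{\alpha,n}$), and because reaching the payload inside $f^b$ costs $b-1$ carrier factors, the payload bands of all powers $b<d$ sit at exponents strictly below $n^2+(d-1)n$ and are annihilated by $D^{n^2+(d-1)n}$, while all terms with two or more payload factors are negligible by (\ref{qan}). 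This carrier/payload mechanism (or the formal-root alternative, which would itself need to be carried out with quantitative bounds) is the missing idea; without it, your proposal is a correct framing of the problem and of the bookkeeping around it, but not a proof.
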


\section{Preliminaries}

Before proving Theorems~\ref{main1} and~\ref{main2}, we would like
to introduce some notation and mention few elementary facts.
Throughout the paper $\P$ stands for the space $\C[z]$ of all
complex polynomials in one variable. Clearly $\P$ is a dense linear
subspace of $\H$. Let
$$
\P_0=\{0\}\ \ \text{and}\ \ \P_k=\{p\in\P:\deg p<k\}\ \ \text{for
$k\in\N$}.
$$
Obviously $\P_k$ is a $k$-dimensional linear subspace of $\P$. If
$k\in\N$ and $c>0$, we denote
$$
\P_{k,c}=\biggl\{p(z)=\sum_{j=0}^{k-1}c_jz^j:|c_j|\leq c\ \
\text{for}\ \ 0\leq j\leq k-1\}.
$$
Since we are going to deal with the Taylor series expansion of
functions $f\in\H$ rather than their values, we consider a sequence
of norms defining the topology of $\H$ different from the one
mentioned in the introduction. Namely for $a\in\N$ and $f\in\H$, we
write
\begin{equation}\label{norm}
\|f\|_a=\sum_{n=0}^\infty |f_n|a^n,\ \ \text{where $f\in\H$,}\ \
f(z)=\sum_{n=0}^\infty f_nz^n.
\end{equation}
It is easy to see that the above sequence of norms is increasing and
defines the original topology on $\H$. Moreover, each of these norms
is submultiplicative:
\begin{equation}\label{norm1}
\text{$\|f\|_a\leq\|f\|_b$ and $\|fg\|_a\leq \|f\|_a\|g\|_a$
whenever $f,g\in\H$, $a,b\in\N$, $a\leq b$.}
\end{equation}

Observe that $D(\P_k)\subseteq \P_{k-1}$ for any $k\in\N$. In
particular, $D^n(\P_k)=\{0\}$ if $n\geq k$. Moreover, $\|Dp\|_a\leq
\frac{k-1}{a}\|p\|_a$ for each $k,a\in\N$ and any $p\in\P_k$.
Iterating this estimate, we obtain
\begin{equation}\label{normd2}
\|D^np\|_a\leq \frac{|(k-n)\dots(k-1)|}{a^n}\|p\|_a\leq
(k/a)^n\|p\|_a\ \ \text{for any $k,n,a\in\N$ and $p\in\P_k$}.
\end{equation}
We also consider the Volterra operator $V:\H\to\H$, $Vf(z)=\int_0^z
f(t)\,dt$. It is easy to see that
\begin{equation}\label{vo}
Vf(z)=\sum_{n=1}^\infty \frac{f_{n-1}}{n}z^n,\ \ \text{where
$f\in\H$,}\ \ f(z)=\sum_{n=0}^\infty f_nz^n
\end{equation}
and that $V$ is a right inverse of $D$. In particular,
\begin{equation}\label{ri}
D^nV^n=I\ \ \text{for any}\ \ n\in\N.
\end{equation}
Using (\ref{vo}), one can easily verify that
\begin{equation}\label{normv1}
\|V^nf\|_a\leq \frac{a^n}{n!}\|f\|_a\ \ \text{for any $n,a\in\N$ and
any $f\in\H$.}
\end{equation}
For $f\in\H$, the {\it support} of $f$ is the set
$\{n\in\Z_+:f^{(n)}(0)\neq 0\}$. Obviously $D$ shifts the supports
to the left and $V$ shifts them to the right. That is, if $A$ is the
support of $f$, then $A+1=\{n+1:n\in A\}$ is the support of $Vf$ and
$(A-1)\cap \Z_+$ is the support of $Df$.

In the proof of Theorems~\ref{main1} and~\ref{main2} we use a
sequence in $\N\times\P$ with specific properties.

\begin{lemma}\label{NtP} There exists a sequence
$\{(d_k,p_k)\}_{k\in\N}$ of elements of $\N\times\P$ such that
\begin{itemize}\itemsep=-3pt
\item[\rm(\ref{NtP}.1)]$d_k\leq k$ and $p_k\in\P_{k,k}$ for each
$k\in\N;$
\item[\rm(\ref{NtP}.2)]for any $d\in\N$, the set $\{p_k:d_k=d\}$ is
dense in $\H$.
\end{itemize}
\end{lemma}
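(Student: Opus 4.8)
The plan is to satisfy condition (\ref{NtP}.2) by arranging that, for every $d\in\N$, the set $\{p_k:d_k=d\}$ already contains one fixed countable dense subset of $\H$, and to satisfy (\ref{NtP}.1) by exploiting that the constraint sets $\P_{k,k}$ increase with $k$ and exhaust $\P$, so any fixed polynomial is admissible for all large $k$. First I would fix a countable set $\{q_m:m\in\N\}\subset\P$ that is dense in $\H$; the polynomials with coefficients in $\mathbb{Q}+i\mathbb{Q}$ serve the purpose, since for given $f\in\H$, $a\in\N$ and $\epsilon>0$ one truncates the Taylor series (the tail $\sum_{n>N}|f_n|a^n$ is small) and then replaces each of the finitely many coefficients by a nearby Gaussian rational, controlling $\|\cdot\|_a$. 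For each $m$ let $K_m\in\N$ be so large that $q_m\in\P_{k,k}$ for all $k\geq K_m$; such $K_m$ exists because it suffices that $k$ exceed $\deg q_m$ and be at least the maximum modulus of the coefficients of $q_m$.

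Next I would fix any bijection $\N\to\N\times\N$, $j\mapsto(\delta_j,\mu_j)$, so that every pair $(d,m)\in\N\times\N$ is hit exactly once, and build the sequence $\{(d_k,p_k)\}$ greedily. Processing $j=1,2,\dots$ in turn, at step $j$ I choose the smallest index $k$ not yet used that satisfies $k\geq\max\{\delta_j,K_{\mu_j}\}$, and set $d_k=\delta_j$ and $p_k=q_{\mu_j}$. Such a $k$ always exists: after $j-1$ steps only finitely many indices have been used, while infinitely many indices exceed $\max\{\delta_j,K_{\mu_j}\}$. Any index $k$ left unused after all steps is given the trivial value $d_k=1$, $p_k=0$.

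It remains to check the two conditions. Condition (\ref{NtP}.1) holds by construction: whenever $k$ is assigned at step $j$ we have $d_k=\delta_j\leq k$ and $p_k=q_{\mu_j}\in\P_{k,k}$ by the choice $k\geq\max\{\delta_j,K_{\mu_j}\}$, while the trivially filled slots satisfy $1\leq k$ and $0\in\P_{k,k}$. For (\ref{NtP}.2), fix $d\in\N$ and $m\in\N$; the pair $(d,m)$ equals $(\delta_j,\mu_j)$ for exactly one $j$, and at step $j$ some index $k$ is assigned with $d_k=d$ and $p_k=q_m$. Hence $\{q_m:m\in\N\}\subseteq\{p_k:d_k=d\}$, and since the former set is dense in $\H$, so is the latter.

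The argument is elementary; the only point that requires care is the compatibility of the two conditions, namely making sure that the size restriction $p_k\in\P_{k,k}$ together with $d_k\leq k$ never obstructs the density requirement. This is exactly what the thresholds $K_m$ and the rule ``smallest unused admissible index'' handle: because $\P_{k,k}$ increases to $\P$, each fixed $q_m$ becomes admissible for all large $k$, and at every stage there remains an unused admissible slot in which to place it.
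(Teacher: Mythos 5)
Your proof is correct and follows essentially the same route as the paper: both arguments distribute a fixed countable dense set of polynomials over every fiber $\{k:d_k=d\}$ via a bijection between $\N$ and $\N\times\N$, using the fact that the sets $\P_{k,k}$ increase to $\P$ to meet the size constraint. The only difference is bookkeeping: the paper picks a bijection $\phi:\N\times\N\to\N$ satisfying $\max\{m,j\}\leq\phi(m,j)$ and a dense sequence pre-indexed so that $s_j\in\P_{j,j}$, which makes the admissibility check immediate, whereas you achieve the same effect with the thresholds $K_m$ and a greedy ``smallest unused admissible index'' rule together with default values on the unused slots.
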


\begin{proof} Since $\P$ is dense in $\H$ and $\P$ is the union of $\P_{k,k}$
for $k\in\N$, we can pick a sequence $\{s_j\}_{j\in\N}$ in $\P$ such
that $\{s_j:j\in\N\}$ is dense in $\H$ and $s_j\in\P_{j,j}$ for any
$j\in\N$. It is well-known and easy to see that there is a bijection
$\phi:\N\times\N\to \N$ such that $\max\{m,j\}\leq \phi(m,j)$ for
any $m,j\in\N$. We define a sequence $\{(d_k,p_k)\}_{k\in\N}$ of
elements of $\N\times\P$ by the formula
$$
(d_k,p_k)=(m,s_j)\ \ \text{if}\ \ \phi(m,j)=k.
$$
The condition $\max\{m,j\}\leq \phi(m,j)$ and the obvious inclusion
$P_{j,j}\subseteq \P_{k,k}$ for $j\leq k$ imply that (\ref{NtP}.1)
is satisfied. Next, let $d\in\N$. From the definition of $(d_k,p_k)$
and bijectivity of $\phi$ it follows that
$\{p_k:d_k=d\}=\{s_j:j\in\N\}$. Hence (\ref{NtP}.2) is also
satisfied.
\end{proof}

\section{Proof of Theorem~\ref{main1}}

Let $\{(d_k,p_k)\}_{k\in\N}$ be the sequence of elements of
$\N\times\P$ provided by Lemma~\ref{NtP}. For each $d\in\N$ let
$B_d=\{k\in\N:d_k=d\}$. By Lemma~\ref{NtP}, $B_d$ are infinite
disjoint subsets of $\N$, whose union is $\N$. Let $m_d=\min B_d$
and $B'_d=B_d\setminus\{m_d\}$. By (\ref{NtP}.1), $m_d\geq d$. We
also need a sequence increasing fast enough. Namely, pick
$\beta:\N\to\N$ such that
\begin{equation}\label{beta}
\text{$\beta(k+1)>\beta(k)+k$ and $\beta(k+1)^{\beta(k)}\leq
2^{\beta(k+1)}$ for any $k\in\N$}.
\end{equation}
For each $d\in\N$, we consider the series
$$
f_d=g_d+\sum_{k\in B'_d} V^{\beta(k)}p_k,\ \ \text{where}\ \
g_d(z)=z^{\beta(m_d)}.
$$
Let $a\in\N$. Since $p_k\in\P_{k,k}$, we have $\|p_k\|_a\leq
k^2a^k$. Thus using (\ref{normv1}), we obtain
$$
\sum_{k\in B'_d} \|V^{\beta(k)}p_{k}\|_a\leq \sum_{k=1}^\infty
\|V^{\beta(k)}p_k\|_a\leq \sum_{k=1}^\infty \frac{k^2a^k
a^{\beta(k)}}{\beta(k)!}<\infty.
$$
Hence the series defining $f_d$ converges absolutely and therefore
$f_d\in\H$ for $d\in\N$. The inclusions $p_k\in\P_k$ and the
inequality $\beta(k+1)>\beta(k)+k$ imply that the supports of $g_d$
and $V^{\beta(k)}p_{k}$ are pairwise disjoint. Hence the supports of
$f_d$ are pairwise disjoint. It is easy to verify that each sequence
of non-zero functions in $\H$ with pairwise disjoint supports is a
Schauder basic sequence. Hence $\{f_d\}_{d\in\N}$ is a Schauder
basic sequence in $\H$ and the closed linear span $L$ of
$\{f_d:d\in\N\}$ consists of the sums of convergent series of the
shape $\sum\limits_{d=1}^\infty c_df_d$ with $c_d\in\C$. In order to
prove Theorem~\ref{main1}, it is enough to demonstrate that
$L\setminus\{0\}\subseteq H(D)$.

Let $f\in L\setminus\{0\}$. Then $f$ is the sum of a convergent
series $\sum\limits_{d=1}^\infty c_df_d$ with $c_d\in\C$ being not
all zero. Since a non-zero scalar multiple of a hypercyclic vector
is hypercyclic, we, multiplying $f$ by a non-zero constant, can
assume that there is $b\in\N$ such that $c_b=1$. Considering the
natural projection onto the subspace of $\H$ of functions whose
support is contained in $\{\beta(m_d):d\in\N\}$, we see that the
series $\sum\limits_{d=1}^\infty c_dg_d$ converges in $\H$. Since
$g_d(z)=z^{\beta(m_d)}$, it follows that $|c_d|^{1/\beta(m_d)}\to
0$. In order to verify that $f\in H(D)$ it suffices to demonstrate
that
\begin{equation}\label{clo1}
\text{$D^{\beta(k)}f-p_k\to 0$ in $\H$ as $k\to\infty$, $k\in B_b$}.
\end{equation}
Indeed, by Lemma~\ref{NtP}, $\{p_k:k\in B_b\}$ is dense in $\H$.
Then (\ref{clo1}) implies that $\{D^{\beta(k)}f:k\in B_b\}$ is dense
in $\H$ and therefore $f\in H(D)$.

It remains to prove (\ref{clo1}). Let $C=\{m_d:d\in\N\}$. Using
definitions of $f$ and $f_d$ and the condition
$|c_d|^{1/\beta(m_d)}\to 0$ it is easy to see that
$$
f=\sum_{d\in\N}c_df_d=g+h,\ \ \text{where}\ \ g=\sum_{d\in\N}c_dg_d\
\ \text{and}\ \ h=\sum_{k\in\N\setminus C} c_{d_k}V^{\beta(k)}p_k,
$$
where the series defining $h$ and $g$ are absolutely convergent in
$\H$. Let $a\in\N$ and $k\in B'_b$. Since $D^n(\P_j)=\{0\}$ for
$n\geq j$, we, using the above display together with (\ref{ri}),
obtain
\begin{equation}\label{sum}
D^{\beta(k)}f=p_k+q_k+h_k,\ \ \text{where}\ \
q_k=\sum_{m_d>k}c_dD^{\beta(k)}g_d\ \ \text{and}\ \
h_k=\sum_{n\notin C,\ n>k} c_{d_n}V^{\beta(n)-\beta(k)}p_n.
\end{equation}
Condition $|c_d|^{1/\beta(m_d)}\to 0$ implies that there is
$c=c(a)>0$ such that $|c_d|\leq c(4a)^{-\beta(m_d)}$ for any
$d\in\N$. As we have already mentioned, $\|p_n\|_a\leq n^2a^n$ for
any $n\in\N$. By (\ref{beta}) $\beta(n)-\beta(k)\geq n$ for any
$n>k$. Hence, using (\ref{normv1}) and the inequality $|c_d|\leq c$,
we have
$$
\|h_k\|_a\leq \sum_{n=k+1}^\infty
\frac{cn^2a^{\beta(n)-\beta(k)+n}}{(\beta(n)-\beta(k))!}\leq
\!\!\!\sum_{n=k+1}^\infty
\frac{c(\beta(n)-\beta(k))^2a^{2(\beta(n)-\beta(k))}}{(\beta(n)-\beta(k))!}
\leq \!\!\!\sum_{m=k+1}^\infty \frac{cm^2a^{2m}}{m!}\to 0
$$
as $k\to\infty$.  Next, we estimate $\|q_k\|_a$. Using the
inequality $|c_d|\leq c(4a)^{-\beta(m_d)}$, we obtain
$$
\|q_k\|_a\leq\!\!
\sum_{m_d>k}\!\!c(4a)^{-\beta(m_d)}\|D^{\beta(k)}g_d\|_a\leq c\!\!
\sum_{m_d>k}\!\!
(4a)^{-\beta(m_d)}\beta(m_d)^{\beta(k)}a^{\beta(m_d)}=
c\!\!\sum_{m_d>k}\! 4^{-\beta(m_d)}\beta(m_d)^{\beta(k)},
$$
where the last inequality in the above display follows from the
equalities $g_d(z)=z^{\beta(m_d)}$ and (\ref{normd2}). According to
(\ref{beta}), $\beta(m_d)^{\beta(k)}\leq 2^{\beta(m_d)}$ whenever
$m_d>k$. Substituting these inequalities into the above display we
arrive to
$$
\|q_k\|_a\leq c \sum_{m_d>k} 2^{-\beta(m_d)}.
$$
It follows that $\|q_k\|_a\to 0$ as $k\to\infty$. As we already
know, $\|h_k\|_a\to 0$ as $k\to\infty$. Since $a\in\N$  is
arbitrary, $q_k\to 0$ and $h_k\to 0$ in $\H$ as $k\to\infty$. By
(\ref{sum}), $D^{\beta(k)}f-p_k\to 0$ in $\H$ as $k\to\infty$, $k\in
B_b$. The proof of (\ref{clo1}) and of Theorem~\ref{main1} is now
complete.

\section{Proof of Theorem~\ref{main2}}

The main building blocks of our construction of a generator of an
algebra contained in $H(D)$ are polynomials of the shape
\begin{equation}\label{ra}
r_{\alpha,n}(z)=\frac{z^n}{n^n}+q_{\alpha,n}(z),\ \ \text{where}\ \
q_{\alpha,n}(z)=n^{(d-1)n}\frac{V^{n^2+(d-1)n}p(z)}{dz^{(d-1)n}},\
n>1,\ \alpha=(d,p)\in \N\times\P.
\end{equation}

Direct calculations show that
\begin{equation}\label{ra1}
\text{if}\ p(z)=\sum\limits_{j=0}^{k-1}c_jz^j,\ \ \text{then}\ \
q_{\alpha,n}(z)=\frac{n^{(d-1)n}}{d}\sum_{j=0}^{k-1}\frac{j!c_jz^{j+n^2}}{(j+(d-1)n+n^2)!}.
\end{equation}
The next lemma is the reason for the choice of $r_{\alpha,n}$.

\begin{lemma}\label{est1} Let $\alpha=(d,p)\in \N\times\P$ and $a\in\N$.
Then
\begin{align}\label{ra2}
&\lim_{n\to\infty}\|r_{\alpha,n}\|_a=0,
\\ \label{ra3}
& \text{for any $\nu,b\in\N$ and $h\in\P$,}\ \
\lim_{n\to\infty}\|D^{\nu}(hr_{\alpha,n}^b)\|_a=0,
\\ \label{ra4}
&\text{for any $h\in\P$ and $b\in\N$, $1\leq b<d$,}\ \
\lim_{n\to\infty}\|D^{(d-1)n+n^2}(hr_{\alpha,n}^b)\|_a=0,
\\ \label{ra5}
&\lim_{n\to\infty}\|p-D^{(d-1)n+n^2}(r_{\alpha,n}^d)\|_a=0.
\end{align}
\end{lemma}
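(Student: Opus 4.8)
The plan is to read off all four limits from the Taylor coefficients of $r_{\alpha,n}$ and its powers. Write $m=(d-1)n+n^2$ and $C=\max_{0\leq j<k}j!|c_j|$, where $p(z)=\sum_{j=0}^{k-1}c_jz^j$. The first step is to record, from (\ref{ra1}) together with $\|z^N\|_a=a^N$ and the trivial bound $(j+m)!\geq m!$, the single estimate
\begin{equation*}
\|q_{\alpha,n}\|_a\leq \frac{Ck\,n^{(d-1)n}a^{n^2+k}}{d\,m!}.
\end{equation*}
Since $m!\geq(m/e)^m$ and $m\sim n^2$, the right-hand side tends to $0$ faster than any power of $n$. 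Combined with $\|z^n/n^n\|_a=a^n/n^n\to0$, this proves (\ref{ra2}) at once and shows that $\|r_{\alpha,n}\|_a$ decays faster than any power of $n$.

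For (\ref{ra3}), the point is that $hr_{\alpha,n}^b$ is a polynomial of degree at most $K_n:=\deg h+b(n^2+k-1)$, so (\ref{normd2}) and submultiplicativity (\ref{norm1}) give $\|D^\nu(hr_{\alpha,n}^b)\|_a\leq((K_n+1)/a)^\nu\|h\|_a\|r_{\alpha,n}\|_a^b$. Here $((K_n+1)/a)^\nu$ grows only polynomially in $n$, whereas $\|r_{\alpha,n}\|_a^b$ decays faster than any power of $n$ by the first step; hence the product tends to $0$.

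The heart of the matter is (\ref{ra4}) and (\ref{ra5}), which I would treat together by expanding
\begin{equation*}
r_{\alpha,n}^b=\sum_{i=0}^b\binom{b}{i}\frac{z^{(b-i)n}}{n^{(b-i)n}}\,q_{\alpha,n}^i
\end{equation*}
and applying $D^{m}$ to each summand; multiplying by $h$ only inserts a bounded factor $\|h\|_a$ and raises degrees by $\deg h$, which changes nothing below. For $b=d$ the summand $i=1$ is engineered to reproduce $p$: the coefficient of $z^{j+n^2}$ in $q_{\alpha,n}$ is $\frac{n^{(d-1)n}}{d}\cdot\frac{j!c_j}{(j+m)!}$, so $d\,z^{(d-1)n}/n^{(d-1)n}$ shifts it to degree $j+m$ with coefficient $\frac{j!c_j}{(j+m)!}$, and $D^{m}z^{j+m}=\frac{(j+m)!}{j!}z^j$ returns $\sum_j c_jz^j=p$ exactly. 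The summand $i=0$ has degree $dn<m$ for $n>1$ and is annihilated. For (\ref{ra4}), where $1\leq b<d$, both $i=0$ and $i=1$ have degree strictly below $m$ once $n$ is large, because $d-b\geq1$, so $D^{m}$ kills them as well.

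It remains to show that every summand with $i\geq2$ tends to $0$ after $D^{m}$, and this is the only genuine estimate and the main obstacle. For such a summand submultiplicativity and the first-step bound on $\|q_{\alpha,n}\|_a$ give a norm at most a bounded constant times $(n^{(d-1)n}a^{n^2}/m!)^i$, while the summand is a polynomial of degree $N\leq\deg h+bn+i(n^2+k)\sim in^2$, so (\ref{normd2}) supplies a factor $((N+1)/a)^{m}\sim(in^2)^{n^2}$. The decisive quotient is therefore $(in^2)^{n^2}/(m!)^i$; using $m!\geq(m/e)^m$ with $m\sim n^2$, its logarithm is $\sim2(1-i)n^2\log n\to-\infty$ for $i\geq2$, and all remaining factors ($n^{(d-1)ni}$, $a^{in^2}$, the binomial and constant terms) contribute only $O(n^2)$ or $O(n\log n)$ to the logarithm and are swamped. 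Thus each $i\geq2$ summand vanishes in the limit, which finishes (\ref{ra4}) and, for (\ref{ra5}), leaves $D^{m}(r_{\alpha,n}^d)\to p$, i.e. $\|p-D^{m}(r_{\alpha,n}^d)\|_a\to0$.
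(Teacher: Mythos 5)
Your proof is correct and follows essentially the same route as the paper: the same Stirling-based bound on $\|q_{\alpha,n}\|_a$, the same combination of submultiplicativity with (\ref{normd2}) for (\ref{ra3}), and the same binomial splitting of $hr_{\alpha,n}^b$ into the $q_{\alpha,n}^0,q_{\alpha,n}^1$ terms (annihilated by degree count, or reproducing $p$ exactly when $b=d$) and the $q_{\alpha,n}^{i}$, $i\geq2$, terms (shown to vanish by norm estimates). Your coefficient-wise verification that the $i=1$ summand returns $p$ is precisely the paper's identity $D^{n^2+(d-1)n}V^{n^2+(d-1)n}p=p$ made explicit, so the two arguments coincide in substance.
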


\begin{proof} Pick $k\in\N$ such that $p\in\P_{k,k}$. Then
$p(z)=\sum\limits_{j=0}^{k-1}c_jz^j$ with $|c_j|\leq k$ for $0\leq
j\leq k-1$. According to (\ref{ra1}) and (\ref{norm}),
$$
\|q_{\alpha,n}\|_a=
\sum_{j=0}^{k-1}\frac{n^{(d-1)n}j!|c_j|a^{j+n^2}}{d(j+(d-1)n+n^2)!}\leq
\sum_{j=0}^{k-1}\frac{k!n^{(d-1)n}a^{k+n^2}}{(j+(d-1)n+n^2)!}\leq
\frac{2k!n^{(d-1)n}a^{k+n^2}}{(n^2)!},
$$
where we have used the inequalities $|c_j|j!\leq k!$ for $0\leq
j\leq k-1$. Using the Stirling formula, we easily see that
$(n^2)!\geq 2(n^2/e)^{n^2}$. From this inequality and the above
display we see that $\|q_{\alpha,n}\|_a\leq
k!a^kn^{(d-1)n}(ea)^{n^2}n^{-2n^2}$ for $n>1$. Hence there exists
$c=c(a,d,p)>0$ such that
\begin{equation}\label{qan}
\|q_{\alpha,n}\|_a\leq (cn)^{-2n^2}\ \ \text{for any $n>1$}.
\end{equation}
By (\ref{ra}), $r_{\alpha,n}(z)=(z/n)^n+q_{\alpha,n}(z)$. Hence
$\|r_{\alpha,n}\|_a\leq (a/n)^n+\|q_{\alpha,n}(z)\|_a$ and therefore
(\ref{ra2}) follows from (\ref{qan}).

According to (\ref{qan}), we can pick $c_1=c_1(a,p,d)>1$ such that
$\|q_{\alpha,n}\|_a\leq (c_1-1)(a/n)^n$ for any $n>1$. Then
$\|r_{\alpha,n}\|_a\leq (a/n)^n+\|q_{\alpha,n}(z)\|_a\leq
c_1(a/n)^n$ for any $n>1$. Let $h\in\P$ and $b,\nu\in\N$. Using
submultiplicativity of the norm $\|\cdot\|_a$, we obtain
$$
\|hr_\alpha^b\|_a\leq c_1^b\|h\|_a (a/n)^{bn}\ \ \text{for any
$n>1$}.
$$
Next, pick $\mu\in\N$ such that $h\in\P_\mu$. By (\ref{ra}),
$hr_\alpha^b\in\P_{bn^2+bk+\mu}$. Thus, applying the above display
and the estimate (\ref{normd2}), we have
$$
\|D^\nu(hr_\alpha^b)\|_a\leq c_1^b\|h\|_a
(bn^2+bk+\mu)^\nu(a/n)^{bn}\ \ \text{for any $n>1$}.
$$
Equality (\ref{ra3}) follows immediately from the above estimate.

Finally, assume that $1\leq b\leq d$. Since
$r_{\alpha,n}(z)=(z/n)^n+q_{\alpha,n}(z)$, we have
\begin{align*}
&(hr_{\alpha,n}^b)(z)=\sum_{j=0}^b \bin{b}{j}
\frac{z^{nj}q_{\alpha,n}(z)^{b-j}h(z)}{n^{nj}}=f_n(z)+g_n(z),\ \
\text{where}
\\
&f_n(z)=\frac{z^{bn}h(z)}{n^{bn}}+\frac{bq_{\alpha,n}(z)h(z)z^{(b-1)n}}{n^{(b-1)n}}\
\ \text{and}\ \ g_n(z)=\sum_{0\leq j\leq b-2}\bin{b}{j}
\frac{z^{nj}h(z)q_{\alpha,n}(z)^{b-j}}{n^{nj}}.
\end{align*}
First, we shall estimate $\|g_n\|_a$. Using (\ref{qan}) and
submultiplicativity of $\|\cdot\|_a$, we get
$$
\|g_n\|_a\leq \|h\|_a\sum_{0\leq j\leq b-2}\bin{b}{j}
\frac{a^{nj}}{n^{nj}}(cn)^{-2(b-j)n^2}\ \ \text{for any $n>1$}.
$$
For $n\geq a$ we have $(a/n)^{nj}\leq 1$. Since $b-j$ in the above
sum is at least $2$, for $n\geq c^{-1}$ we have
$(cn)^{-2(b-j)n^2}\leq (cn)^{-4n^2}$. Hence, we can write
\begin{equation}\label{ges}
\|g_n\|_a\leq \|h\|_a\,(cn)^{-4n^2}\sum_{0\leq j\leq b-2}\bin{b}{j}
\leq 2^b\|h\|_a\,(cn)^{-4n^2}\ \ \text{for $n> \max\{a,c^{-1}\}$}.
\end{equation}
Recall that $h\in\P_\mu$ and $p\in\P_k$. Then
$g_n\in\P_{bn^2+bk+\mu}$. According to (\ref{normd2}),
$\|D^{n^2+(d-1)n}g_n\|_a\leq (bn^2+bk+\mu)^{n^2+(d-1)n}\|g_n\|_a$.
By (\ref{ges}),
$$
\|D^{n^2+(d-1)n}g_n\|_a\leq
2^b\|h\|_a\,(bn^2+bk+\mu)^{n^2+(d-1)n}(cn)^{-4n^2}\ \ \text{for $n>
\max\{a,c^{-1}\}$}.
$$
Passing to the limit as $n\to\infty$ we arrive to
\begin{equation}\label{ges1}
\lim_{n\to\infty}\|D^{n^2+(d-1)n}g_n\|_a =0.
\end{equation}
Since $\deg q_{\alpha,n}<n^2+k$ and $\deg h<\mu$, we see that $\deg
f_n< n^2+k+\mu+(b-1)n-1$. Hence $\deg f_n<n^2+(d-1)n+k+\mu-(d-b)n$.
If $b<d$ and $n\geq k+\mu$, we have $\deg f_n<n^2+(d-1)n$. Hence
$D^{n^2+(d-1)n}f_n=0$. Since $hr_{\alpha,n}^b=f_n+g_n$, we have
$D^{n^2+(d-1)n}(hr_{\alpha,n}^b)=D^{n^2+(d-1)n}g_n$ for $n\geq
k+\mu$ and (\ref{ra4}) follows from (\ref{ges1}).

Now consider the case $b=d$ and $h=1$. In this case
$$
f_n(z)=\frac{z^{dn}}{n^{dn}}+\frac{dz^{(d-1)n}q_{\alpha,n}(z)}{n^{(d-1)n}}.
$$
Since $n^2>n$, we have $n^2+(d-1)n>dn$ and therefore
$D^{n^2+(d-1)n}$ annihilates the first summand in the above display.
Since $q_{\alpha,n}(z)=r_{\alpha,n}(z)-(z/n)^n$, from (\ref{ra}) it
follows that
$$
\frac{dz^{(d-1)n}q_{\alpha,n}(z)}{n^{(d-1)n}}=(V^{n^2+(d-1)n}p)(z).
$$
According to (\ref{ri}), we obtain
$D^{n^2+(d-1)n}f_n=D^{n^2+(d-1)n}V^{n^2+(d-1)n}p=p$. Since
$r_{\alpha,n}^d=f_n+g_n$,
$p-D^{n^2+(d-1)n}(r_{\alpha,n}^d)=-D^{n^2+(d-1)n}g_n$ and
(\ref{ra5}) follows from (\ref{ges1}).
\end{proof}

We are ready to prove Theorem~\ref{main2}. Let
$\{(d_k,p_k)\}_{k\in\N}$ be the sequence in $\N\times\P$ provided by
Lemma~\ref{NtP}. Denote $\alpha_k=(d_k,p_k)$. We shall construct
inductively natural numbers $n_k\geq 2$ such that for any $k\in\N$,
\begin{itemize}\itemsep=-3pt
\item[(a1)] $n_k>n_{k-1}$ if $k\geq 2$;
\item[(a2)] $\|r_k\|_k\leq 2^{-k}$, where $r_k=r_{\alpha_k,n_k}$;
\item[(a3)] if $k\geq 2$, then $\|D^\nu (f_k^j-f_{k-1}^j)\|_k\leq 2^{-k}$
for any $j\leq k$ and $\nu\leq n^2_{k-1}+kn_{k-1}$, where
$f_a=\sum\limits_{l=1}^a r_l$;
\item[(a4)]$\|D^{\nu_k} (f_k^{j})\|_k\leq 2^{-k}$ for $1\leq
j<d_k$ and $\|p_k-D^{\nu_k} (f_k^{d_k})\|_k\leq 2^{-k}$, where
$\nu_k=n_k^2+(d_k-1)n_k$.
\end{itemize}

At step 1 we take $n_1=3$. Conditions (a1) and (a3) for $k=1$ are
trivially satisfied. According to (\ref{NtP}.1), $d_1=1$ and
$p_1=c$, where $c\in\C$ is a constant, $|c|\leq1$. By (\ref{ra}),
$r_1(z)=\frac{z^3}{27}+\frac{cz^9}{9!}$ and therefore
$\|r_1\|_1\leq\frac1{27}+\frac1{9!}<2^{-1}$ and (a2) for $k=1$ is
satisfied. Since $\nu_1=9$, $d_1=1$ and $D^9r_1=c=p_1$, (a4) for
$k=1$ is also satisfied. This provides us with the basis of
induction.

Assume now that $m\geq 2$ and $n_k$ for $1\leq k\leq m-1$ satisfying
(a1--a4) are already constructed. We have to construct $n_m$
satisfying (a1--a4) for $k=m$. We shall actually show that any
sufficiently large $n_m$ satisfies (a1--a4). For any $n\in\N$, $n>1$
consider
$$
\text{$\rho_{n}=r_{\alpha_m,n}$, $\phi_n=f_{m-1}+\rho_n$ and
$\beta_n=n^2+(d_m-1)n$}.
$$
Applying Lemma~\ref{est1}, we obtain
\begin{align}\label{e1}
&\lim_{n\to\infty}\|\rho_n\|_m=0.
\\ \label{e2}
&\lim_{n\to\infty}\|D^\nu(h\rho_n^j)\|_m=0\ \ \text{for any
$\nu,j\in\N$ and $h\in\P$.}
\\ \label{e3}
&\lim_{n\to\infty}\|p_m-D^{\beta_n}(\rho_n^{d_m})\|_m=0\ \
\text{and}\ \  \lim_{n\to\infty}\|D^{\beta_n}(h\rho_n^j)\|_m=0\ \
\text{for any $h\in\P$ and $1\leq j<d_m$.}
\end{align}
Using the binomial formula, we write
\begin{equation}\label{bi}
\phi_n^j-f_{m-1}^j=\sum_{l=0}^{j-1}\bin{j}{l} f_{m-1}^l\rho_n^{j-l}.
\end{equation}
From (\ref{bi}) and (\ref{e2}) it follows that
$\|D^\nu(\phi_n^j-f_{m-1}^j)\|_m\to 0$ as $n\to\infty$ for any
$j,\nu\in\N$. Hence
\begin{equation}\label{e4}
\lim_{n\to\infty}\max_{1\leq j\leq m\atop 1\leq\nu\leq b}
\|D^{\nu}(\phi_n^j-f_{m-1}^j)\|_m=0,\ \ \text{where
$b=n_{m-1}^2+mn_{m-1}$.}
\end{equation}
According to (\ref{bi}) with $j\leq d_m$ and (\ref{e3}),
\begin{equation}\label{e5}
\lim_{n\to\infty}\|p_m-D^{\beta_n}\phi_n^{d_m}\|_m=0\ \ \text{and}\
\ \lim_{n\to\infty}\max_{1\leq j<d_m}\|D^{\beta_n}\phi_n^j\|_m=0.
\end{equation}

It is easy to see that if we set $n_m=n$, then $r_m=\rho_n$,
$\nu_m=\beta_n$ and $f_m=\phi_n$. Thus formulae (\ref{e1}),
(\ref{e4}) and (\ref{e5}) imply that (a1--a4) for $k=m$ are
satisfied if we choose $n_m=n$ being large enough. This completes
the inductive construction of the sequence $\{n_k\}$ satisfying
(a1--a4).

Condition (a2) and formula (\ref{norm1}) imply that $\|r_k\|_m\leq
2^{-k}$ for any $k\geq m$. Hence the series
$\sum\limits_{k=1}^\infty r_k$ converges in $\H$ to some $f\in\H$.
Equivalently, $f$ is the limit in $\H$ of the sequence
$\{f_k\}_{k\in\N}$. In order to prove Theorem~\ref{main2}, it
suffices to demonstrate that $p\circ f\in H(D)$ for any non-constant
polynomial $p$. Since the set $H(D)$ is closed under multiplication
by non-zero scalars, it is enough to show that $p\circ f\in H(D)$ if
a polynomial $p$ has shape
$$
p(z)=z^d+\sum_{j=0}^{d-1} c_jz^j,\quad d\in\N.
$$
By Lemma~\ref{NtP}, the set $B=\{k\in\N:d_k=d\}$ is infinite and the
set $\{p_k:k\in B\}$ is dense in $\H$. Thus it is enough to prove
that
\begin{equation}\label{final}
p_k-D^{\nu_k}(p\circ f)\to 0\text{\ \ in $\H$ as $k\to\infty$, $k\in
B$,}
\end{equation}
where $\nu_k=n_k^2+(d_k-1)n_k$. Indeed, if it is the case, then
density of $\{p_k:k\in B\}$ in $\H$ implies density of
$\{D^{\nu_k}(p\circ f):k\in B\}$ in $\H$ and therefore
hypercyclicity of $p\circ f$ for $D$. It remains to prove
(\ref{final}). Let $m\in\N$ and $k\in B$ be such that $k\geq m$.
Since $d_k=d$ and $m\leq k$, condition (a4) implies
$$
\|p_k-D^{\nu_k}(f_k^d)\|_m\leq 2^{-k}\ \ \text{and}\ \
\|D^{\nu_k}(f_k^j)\|_m\leq 2^{-k}\ \ \text{for $1\leq j<d$}.
$$
Let $\nu\in\N$, $\nu\geq k$. Then $d=d_k\leq k\leq\nu$ and according
to (a3), we have
$$
\|D^{\nu_k}(f_{\nu+1}^j)-D^{\nu_k}(f_{\nu}^j)\|_m\leq 2^{-\nu-1} \ \
\text{for $1\leq j\leq d$}.
$$
Using the triangle inequality and the above two displays, we get
that
$$
\|p_k-D^{\nu_k}(f_n^d)\|_m\leq 2^{-k}+\sum_{\nu=k}^n
2^{-\nu-1}\leq2^{1-k}\ \ \text{and}\ \ \|D^{\nu_k}(f_n^j)\|_m\leq
2^{-k}+\sum_{\nu=k}^n 2^{-\nu-1}\leq2^{1-k}\ \ \text{for $1\leq
j<d$}
$$
for any $n\geq k$. Since $f_n\to f$ in $\H$ we have $f_n^j\to f^j$
in $\H$ as $n\to\infty$ for any $j\in\N$. Therefore passing to the
limit as $n\to\infty$ in the above display, we obtain
$$
\|p_k-D^{\nu_k}(f^d)\|_m\leq 2^{1-k}\ \ \text{and}\ \
\|D^{\nu_k}(f^j)\|_m\leq 2^{1-k}\ \ \text{for $1\leq j<d$, $k\in B$,
$k\geq m$}.
$$
Hence
$$
\|p_k-D^{\nu_k}(p\circ f)\|_m\leq 2^{1-k}\biggl(1+\sum_{1\leq
j<d}|c_j|\biggr)\ \ \text{for $k\in B$, $k\geq m$.}
$$
Passing to the limit as $k\to\infty$, we see that
$\|p_k-D^{\nu_k}(p\circ f)\|_m\to 0$ as $k\to \infty$, $k\in B$.
Since $m\in\N$ is arbitrary, (\ref{final}) is satisfied. The proof
of Theorem~\ref{main2} is now complete.

\section{Remarks}

The function $f$ constructed in the proof of Theorem~\ref{main2} has
moderate growth. Namely, it is easy to see that
$|f(z)|=O(e^{(1+\epsilon)|z|})$ as $|z|\to\infty$ for each
$\epsilon>0$. In particular, $f$ has finite exponential type. It is
worth noting that a function from $H(D)$ can not have exponential
type $<1$ \cite{mac}. It is also easy to verify that there can be no
common growth restriction for the functions from the space $L$ from
Theorem~\ref{main1}. We would like to discuss possible modifications
of Questions~\ref{ar1} and~\ref{ar2}. Note that $H(D)$ contains (up
to the zero function) no non-trivial ideals in $\H$. Indeed, let
$f\in\H\setminus\{0\}$. Then the function
$g(z)=\overline{f(\overline{z})}$ also belongs to $\H\setminus\{0\}$
and $(fg)(\R)\subseteq \R$. The latter inclusion implies that
$fg\notin H(D)$. Indeed the set of functions real on the real axis
is closed and nowhere dense in $\H$ and is preserved by $D$. Thus
the ideal generated by $f$ contains the non-zero function $fg$,
which is not hypercyclic for $D$. Finally we would like to raise the
following question.

\begin{question}\label{qs} Does $H(D)$ contain all non-constant
functions from a non-trivial closed subalgebra of $\H$?
Equivalently, does there exist $f\in\H$ such that $g\circ f\in H(D)$
for any non-constant $g\in\H$?
\end{question}

It seems likely that the answer to the above question is negative.
To prove this it would be sufficient for any $f\in\H$ to find a
non-constant $g\in\H$ and a bounded sequence $\{z_n\}_{n\in\Z_+}$ in
$\C$ such that the sequence $\{(g\circ f)^{(n)}(z_n)\}_{n\in\Z_+}$
is bounded. Indeed, boundedness of the last sequence would prevent
$g\circ f$ from being hypercyclic for $D$.

Leon and Montes \cite{alf2} have shown that if $T$ is a continuous
linear operator on a Banach space $X$ and $\sigma_e(T)$, being the
set of $\lambda\in\C$ such that $T-\lambda I$ is not Fredholm, does
not intersect the closed unit ball $\{z\in\C:|z|\leq 1\}$, then
there is no closed infinite dimensional subspaces $L\subset X$ such
that $L\setminus\{0\}\subset H(T)$. It is easy to see that
$\sigma_e(D)=\varnothing$. Thus the above result does not carry
through to operators on Fr\'echet spaces.

\bigskip

{\bf Acknowledgements.} \ The author is grateful to the referee for
helpful comments and numerous corrections.

\small\rm

\vskip1truecm

\scshape

\noindent Stanislav Shkarin

\noindent Queens's University Belfast

\noindent Department of Pure Mathematics

\noindent University road, Belfast, BT7 1NN, UK

\noindent E-mail address: \qquad {\tt s.shkarin@qub.ac.uk}


\begin{thebibliography}{99}

\itemsep=-2pt

\bibitem{ansa}S.~Ansari, \it Hypercyclic and cyclic vectors, \rm
J. Funct.Anal.\ \bf128\rm\ (1995), 374--383

\bibitem{aron1}R.~Aron, J.~Conejero, A.~Peris and J.~Seoane-Sep\'ulveda,
\it Sums and products of bad functions.  Function spaces, \rm
47--52, Contemp. Math. \bf435\rm, Amer. Math. Soc., Providence, RI,
2007

\bibitem{aron2}R.~Aron, J.~Conejero, A.~Peris and
J.~Seoane-Sep\'ulveda, \it Powers of hypercyclic functions for some
classical hypercyclic operators, \rm Integral Equations Operator
Theory \bf58\rm\ (2007), 591--596

\bibitem{bama}F.~Bayart and E.~Matheron, \it Dynamics of  linear
operators, \rm Cambridge University Press, 2009

\bibitem{bourd}P.~Bourdon, \it Invariant manifolds of hypercyclic
vectors, \rm Proc. Amer. Math. Soc.  \bf 118\rm\ (1993), 845--847

\bibitem{alf1}M.~Gonz\'ales, F.~Leon-Saavedra and
A.~Montes-Rodr\'{\i}guez, \it Semi-Fredholm theory: hypercyclic and
supercyclic subspaces, \rm Proc. London Math. Soc. \bf81\rm\ (2000),
169--189

\bibitem{ge1}K.~Grosse-Erdmann, \it Universal families and
hypercyclic operators, \rm Bull. Amer. Math. Soc., \bf36\rm\ (1999),
345--381

\bibitem{ge2}K.~Grosse-Erdmann, \it Recent developments in
hypercyclicity, \rm RACSAM Rev. R. Acad. Cienc. Exactas Fis. Nat.
Ser. A Mat., \bf97\rm\  (2003), 273--286

\bibitem{alf2}F.~Leon-Saavedra and
A.~Montes-Rodr\'{\i}guez, \it Spectral theory and  hypercyclic
subspaces, \rm Trans. Amer. Math. Soc. \bf353\rm\ (1997), 247--267

\bibitem{mac}G.~MacLane, \it Sequences of derivatives and normal families. \rm J.
Analyse Math. \bf2\rm\ (1952), 72--87


\end{thebibliography}
\end{document}